\documentclass[12pt]{article}
\usepackage{lmodern}
\usepackage{microtype}

\usepackage[english]{babel}
\usepackage[all]{xy}
\usepackage{fancyhdr}
\usepackage{setspace, amsmath, amsthm, amssymb, amsfonts, amscd, epic, graphicx, ulem, dsfont}
\usepackage{amsthm}
\usepackage[T1]{fontenc}
\usepackage{cases,color}
\newtheorem{thm}{Theorem}

\newtheorem{proposition}[thm]{Proposition}
\newtheorem{corollary}[thm]{Corollary}

\makeatletter

\@addtoreset{equation}{section}
\makeatother

\title{Ricci Solitons on the Lie Group $Sol^3_{m,n}$ and Applications}

\author{Mohammed El Amine Mekki\footnote{Ecole nationale Sup\'{e}rieure d'Informatique, Algeria. Email: m\_mekki@esi.dz}, Ahmed Mohammed Cherif\footnote{University Mustapha Stambouli Mascara, Mascara 29000, Algeria. Email: a.mohammedcherif@univ-mascara.dz}}
\date{}

\begin{document}
\maketitle
	
\begin{abstract}
In this paper, we study Ricci solitons on the three-dimensional solvable Lie group $\mathrm{Sol}^{3}_{m,n}$ equipped with a left-invariant Riemannian metric, viewed as a generalization of the classical $\mathrm{Sol}^{3}$ geometry. We investigate harmonic maps, harmonic sections, and geodesic curves, including the geodesic properties of the integral curves of the Ricci soliton vector field. We also characterize harmonic linear maps from $\mathrm{Sol}^{3}_{m,n}$ into Euclidean spaces.
\begin{flushleft}
\textit{\textbf{Keywords:}} Lie groups, Ricci solitons, Harmonic maps.\\
\textit{\textbf{MSC:}} 58E99, 58E20,  53C43.
\end{flushleft}
\end{abstract}

%%%%%%%%%%%%%%%%%%%%%%%%%%%%%%%%%%%%%%%%%%%%%%%%%%%%%%%%%%%%%%%%%%%%%%%%%%%%%%%%%%%%%%%%%%%%%%%%%%%%%%%%%%%%%%%%%%%%%%%%%%%%%%%%%%%%%

\section{Introduction}

Let $(M,g)$ be a Riemannian manifold. We denote by $\nabla$, $R$, and
$\operatorname{Ric}$ the Levi--Civita connection, the Riemann curvature
tensor, and the Ricci curvature tensor of $(M,g)$, respectively. These are
defined by (see \cite{ON})
\begin{align}
R(X,Y)Z
&=
[\nabla_X,\nabla_Y]Z-\nabla_{[X,Y]}Z,\label{eq1.2}\\
\operatorname{Ric}(X,Y)
&=
g\bigl(R(X,e_i)e_i,Y\bigr),\nonumber
\end{align}
where $\{e_i\}$ is a local orthonormal frame on $(M,g)$ and
$X,Y,Z\in\Gamma(TM)$.\\
A Ricci soliton on a Riemannian manifold $(M,g)$ is a quadruple
$(M,g,\xi,\lambda)$ consisting of a smooth vector field $\xi$ and a real
constant $\lambda$ satisfying
\begin{equation}\label{eq1.6}
\operatorname{Ric}
+\frac{1}{2}\mathcal{L}_{\xi}g
=
\lambda g,
\end{equation}
where $\mathcal{L}_{\xi}g$ denotes the Lie derivative of the metric $g$ along the vector field $\xi$.
According to the sign of the constant $\lambda$, a Ricci soliton is said to be
\begin{itemize}
    \item shrinking if $\lambda>0$,
    \item steady if $\lambda=0$,
    \item expanding if $\lambda<0$.
\end{itemize}
If the vector field $\xi$ is the gradient of a smooth function
$f\in C^\infty(M)$, that is, $\xi=\operatorname{grad}f$,
then $(M,g,\nabla f,\lambda)$ is called a gradient Ricci soliton,
and the function $f$ is referred to as its potential function. In
this case, equation \eqref{eq1.6} is equivalent to
\begin{equation}\label{eq1.7}
\operatorname{Ric}
+\operatorname{Hess}f
=
\lambda g,
\end{equation}
where $\operatorname{Hess}f$ denotes the Hessian of the smooth function $f$.
If the vector field $\xi$ is either identically zero or a Killing vector field, that is,
$\mathcal{L}_{\xi}g=0,$
then equation \eqref{eq1.6} reduces to
$\operatorname{Ric}=\lambda g,$
which is precisely the Einstein equation. In this case, $(M,g)$ is an Einstein manifold with Einstein constant $\lambda$. A Ricci soliton is said to be non-trivial if the underlying Riemannian metric $(M,g)$ is not Einstein. Ricci solitons play a fundamental role in differential geometry, particularly in the study of the Ricci flow. They arise naturally as self-similar solutions to the Ricci flow and frequently appear as singularity models. Consequently, Ricci solitons provide valuable insights into the geometric and topological properties of Riemannian manifolds (see \cite{H1,H2}).\\
The three-dimensional solvable Lie group $\mathrm{Sol}^{3}$, one of
Thurston's eight model geometries, can be generalized by considering the
semidirect product $\mathbb{R}^{2}\rtimes_{U}\mathbb{R}$, which defines the
family of solvable Lie groups $\mathrm{Sol}^{3}_{m,n}$ (see
\cite{Heber,Tamaru,Scott}). Here,
\begin{equation}\label{eq1.10}
L=
\begin{pmatrix}
m&0\\
0&-n
\end{pmatrix},
\quad
U(t)=\exp(tL)=
\begin{pmatrix}
{\rm e}^{mt}&0\\
0&{\rm e}^{-nt}
\end{pmatrix},
\end{equation}
where $m,n>0$. The classical Sol geometry is recovered for
$m=n=1$, namely $\mathrm{Sol}^{3}_{1,1}=\mathrm{Sol}^{3}$.
In this work, we consider the left-invariant Riemannian metric
\[
g={\rm e}^{-2mt}\,dx^{2}+{\rm e}^{2nt}\,dy^{2}+dt^{2},
\]
which is a two-parameter perturbation of the canonical metric of
$\mathrm{Sol}^{3}$. A left-invariant frame on $\mathrm{Sol}^{3}_{m,n}$ is
given by
\begin{equation}\label{eq1.13}
e_{1}={\rm e}^{mt}\frac{\partial}{\partial x},
\quad
e_{2}={\rm e}^{-nt}\frac{\partial}{\partial y},
\quad
e_{3}=\frac{\partial}{\partial t}.
\end{equation}
A smooth map $\varphi:(M,g)\longrightarrow(N,h)$ between two Riemannian
manifolds has the energy on a compact domain $D\subset M$ defined by
\begin{equation}\label{eq1.1*}
E(\varphi;D)=\frac{1}{2}\int_D|d\varphi|^2\,v^g,
\end{equation}
where $v^g$ is the volume element of $(M,g)$ and $|d\varphi|$ denotes the
Hilbert--Schmidt norm of the differential $d\varphi$.
A map $\varphi$ is called harmonic if it is a critical point of the
energy functional \eqref{eq1.1*}. Its Euler--Lagrange equation is
\begin{equation}\label{eq1.2*}
\tau(\varphi)=\operatorname{Tr}_g\nabla d\varphi
=
\nabla^\varphi_{e_i}d\varphi(e_i)
-d\varphi(\nabla^M_{e_i}e_i)
=0,
\end{equation}
where $\{e_i\}$ is a local orthonormal frame on $(M,g)$,
$\nabla^{M}$ is the Levi--Civita connection of $(M,g)$, and
$\nabla^\varphi$ is the pull-back connection on $\varphi^{-1}TN$
(see \cite{BW,ES}).\\
A smooth vector field $X$ on $(M,g)$ is called a
harmonic section if it is a critical point of the vertical energy
\begin{equation}\label{eq1.3}
{E}^v(X;D)
=
\frac12
\int_D|\nabla X|^2\,v^g.
\end{equation}
Its Euler--Lagrange equation is
\[
\overline{\Delta}X
=
\operatorname{Tr}_g\nabla^2X
=
0.
\]
In \cite{Onda}, Onda proved that the three-dimensional Heisenberg group equipped with a left-invariant Lorentzian metric admits a non-gradient shrinking Lorentz Ricci soliton. More recently, Li, Cherif, and Xie \cite{LCX} investigated Ricci solitons, harmonic maps, and harmonic vector fields on the four-dimensional nilpotent Lie group $Nil^4$. Inspired by the methods developed in \cite{LCX}, we extend this approach to the solvable Lie group $\mathrm{Sol}^3_{m,n}$.\\
Motivated by these works, we investigate Ricci soliton structures on the three-dimensional solvable Lie group $\mathrm{Sol}^3_{m,n}$ endowed with its canonical left-invariant Riemannian metric. We determine the existence of left-invariant Ricci solitons and study their geometric properties. We also derive the geodesic equations of $(\mathrm{Sol}^3_{m,n},g)$ and investigate its geodesic curves. In particular, we examine the integral curves of the Ricci soliton vector field and determine the necessary and sufficient conditions under which they are geodesics.\\
Liouville-type theorems for harmonic maps have attracted considerable attention in differential geometry. In \cite{cherif}, it was proved that every harmonic map from a compact orientable Riemannian manifold without boundary into a non-trivial Ricci soliton $(N,h,\xi,\lambda)$ satisfying either
$\operatorname{Ric}>\lambda h$ or $\operatorname{Ric}<\lambda h$ must be constant. As an application, we establish a Liouville-type theorem for harmonic maps with values in $(\mathrm{Sol}^3_{m,n},g)$. We further study linear maps from $(\mathrm{Sol}^3_{m,n},g)$ into the Euclidean space $\mathbb{R}^d$ and characterize those that are harmonic. Finally, we derive necessary and sufficient conditions for vector fields whose components depend only on the variable $t$ on $(\mathrm{Sol}^3_{m,n},g)$ to define harmonic sections.

\section{Ricci Solitons on $(\mathrm{Sol}^{3}_{m,n},g)$}

\begin{thm}\label{th1}
Let $(\mathrm{Sol}^{3}_{m,n},g)$ be the three-dimensional solvable Lie group
equipped with its canonical left-invariant Riemannian metric. A vector field
$\xi$ defines a Ricci soliton if and only if
$$\xi=
{\rm e}^{-mt}\left[\left(m c_1-mn-n^2\right)x+c_2\right]e_1
-{\rm e}^{nt}\left[\left(n c_1+mn+m^2\right)y+c_3\right]e_2
+c_1e_3 ,$$
where $c_1,c_2,c_3\in\mathbb{R}$ are constants.
Moreover, the Ricci soliton
$(\mathrm{Sol}^{3}_{m,n},g,\xi,\lambda)$
is expanding, with
$
\lambda=-m^2-n^2 .
$
\end{thm}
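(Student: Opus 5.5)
The approach is a direct computation in the left-invariant orthonormal frame \eqref{eq1.13}, followed by solving the resulting linear first-order PDE system for the components of $\xi$.

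\emph{Connection and curvature.} First I compute the brackets: $[e_1,e_2]=0$, $[e_3,e_1]=m\,e_1$, $[e_3,e_2]=-n\,e_2$. Koszul's formula then gives the Levi--Civita connection. The nonzero terms are
\[
\nabla_{e_1}e_1=m\,e_3,\quad \nabla_{e_1}e_3=-m\,e_1,\quad \nabla_{e_2}e_2=-n\,e_3,\quad \nabla_{e_2}e_3=n\,e_2,
\]
and $\nabla_{e_3}e_i=0$. Using \eqref{eq1.2} I then compute the Ricci tensor. Viewing $g$ as $dt^2+{\rm e}^{2at}dx^2+{\rm e}^{2bt}dy^2$ with $a=-m$, $b=n$, I expect it to be diagonal in this frame:
\[
\operatorname{Ric}(e_1,e_1)=m(n-m),\quad \operatorname{Ric}(e_2,e_2)=n(m-n),\quad \operatorname{Ric}(e_3,e_3)=-(m^2+n^2).
\]
This already explains the value $\lambda=-m^2-n^2$.

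\emph{The soliton system.} Write $\xi=f_1e_1+f_2e_2+f_3e_3$ with $f_i$ arbitrary smooth functions of $(x,y,t)$. I use
\[
(\mathcal L_\xi g)(e_i,e_j)=g(\nabla_{e_i}\xi,e_j)+g(\nabla_{e_j}\xi,e_i),
\]
with $e_1(f)={\rm e}^{mt}f_x$, $e_2(f)={\rm e}^{-nt}f_y$ and $e_3(f)=f_t$. This turns \eqref{eq1.6} into six scalar equations:
\begin{align*}
&{\rm e}^{mt}\partial_xf_1+m f_3=\lambda-m(n-m),\qquad {\rm e}^{-nt}\partial_yf_2-n f_3=\lambda-n(m-n),\qquad \partial_tf_3=\lambda+m^2+n^2,\\
&{\rm e}^{mt}\partial_xf_2+{\rm e}^{-nt}\partial_yf_1=0,\quad {\rm e}^{mt}\partial_xf_3+\partial_tf_1+m f_1=0,\quad {\rm e}^{-nt}\partial_yf_3+\partial_tf_2-n f_2=0.
\end{align*}

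\emph{Solving the system.} I proceed in this order.
\begin{enumerate}
\item The third diagonal equation gives $f_3=(\lambda+m^2+n^2)t+h(x,y)$.
\item Substituting into the first two diagonal equations expresses $\partial_xf_1$ and $\partial_yf_2$ explicitly. Integrating gives $f_1$ up to a function of $(y,t)$ and $f_2$ up to a function of $(x,t)$.
\item I feed these into the three off-diagonal equations and compare coefficients of the independent functions ${\rm e}^{\pm mt}$, ${\rm e}^{\pm nt}$, $t\,{\rm e}^{\pm mt}$, and so on, to force $h$ constant ($=c_1$) and $\lambda=-m^2-n^2$. The $(1,3)$ and $(2,3)$ equations are the key ones. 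They tie $t$-dependence to $x$- and $y$-dependence, and a term like $t\,{\rm e}^{-mt}$ can only cancel if its coefficient vanishes.
\item The remaining integration functions must then be constants $c_2,c_3$. This produces $f_1={\rm e}^{-mt}[(mc_1-mn-n^2)x+c_2]$ and $f_2=-{\rm e}^{nt}[(nc_1+mn+m^2)y+c_3]$.
\end{enumerate}
For the converse, I substitute the stated $\xi$ back in and check all six equations directly. Expansion follows since $\lambda<0$ for $m,n>0$.

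\emph{Main obstacle.} The main difficulty is step 3, the separation-of-variables bookkeeping in the mixed equations. One must argue carefully, by differentiating in $x$, $y$, $t$ and using the linear independence of the exponentials for $m,n>0$, that no further non-polynomial solutions survive. If direct coefficient comparison is messy, I would first differentiate the $(1,2)$ equation to decouple $f_1$ and $f_2$.
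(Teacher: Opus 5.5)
Your plan is the same as the paper's: orthonormal frame, six scalar equations, integrate the $(3,3)$ equation first, then the other two diagonal equations, then use the mixed ones. Your brackets, connection, Ricci tensor and the three off-diagonal equations are all correct.

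\textbf{The gap: a sign error.} The first two equations of your system have the wrong sign on $f_3$. From your own formulas $\nabla_{e_1}e_3=-m\,e_1$ and $\nabla_{e_2}e_3=n\,e_2$ you get $g(\nabla_{e_1}\xi,e_1)=e_1(f_1)-mf_3$ and $g(\nabla_{e_2}\xi,e_2)=e_2(f_2)+nf_3$. The correct equations are therefore
\[
{\rm e}^{mt}\partial_xf_1-mf_3=\lambda-m(n-m),\qquad {\rm e}^{-nt}\partial_yf_2+nf_3=\lambda-n(m-n).
\]
This error is not harmless. Your system still forces $\lambda=-m^2-n^2$ and $f_3=c_1$, but it then yields
\[
f_1=-{\rm e}^{-mt}[(mc_1+mn+n^2)x+k_1],\qquad f_2={\rm e}^{nt}[(nc_1-mn-m^2)y+k_2].
\]
This agrees with the stated family only when $c_1=0$. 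So the formulas you announce in step 4 do not follow from the equations you wrote down. Your converse check would also fail: for the stated $\xi$, the left side of your first equation is $2mc_1-mn-n^2$, while the right side is $-mn-n^2$.

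\textbf{How step 3 should go.} Once the signs are fixed, step 3 is cleanest by differentiation, as in the paper, rather than by comparing coefficients. The reason is that $f_1$ and $f_2$ carry unknown integration functions $C_6(y,t)$ and $C_5(x,t)$, which block direct coefficient comparison.

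Differentiating the $(1,3)$ equation in $x$ and using the corrected first equation for $\partial_xf_1$ eliminates $C_6$ and gives
\[
{\rm e}^{mt}h_{xx}+m(\lambda+m^2+n^2){\rm e}^{-mt}=0.
\]
Since $h_{xx}$ does not depend on $t$, this gives $\lambda=-m^2-n^2$ and $h_{xx}=0$. The $(2,3)$ equation differentiated in $y$ gives $h_{yy}=0$ in the same way.

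With $h=Axy+Bx+Cy+D$, first solve $(1,3)$ and $(2,3)$ for $C_6$ and $C_5$, and only then substitute into $(1,2)$. The $(1,2)$ equation alone does not determine $A,B,C$. Separating the ${\rm e}^{(m+n)t}$ and ${\rm e}^{-(m+n)t}$ coefficients then gives $A=B=C=0$. It also leaves $C_6=c_2{\rm e}^{-mt}$ and $C_5=-c_3{\rm e}^{nt}$, which produces exactly the stated $\xi$ with $c_1=D$.
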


\begin{proof}
Let
$
\xi=\alpha_1e_1+\alpha_2e_2+\alpha_3e_3
$
be a smooth vector field on $(\mathrm{Sol}^{3}_{m,n},g)$, where
$\alpha_i\in C^\infty(\mathrm{Sol}^{3}_{m,n})$, $i=1,2,3$.
The vector field $\xi$ defines a Ricci soliton if and only if
\begin{equation}\label{eq2.1}
\operatorname{Ric}_{ij}
+\frac12
\left(
g(\nabla_{e_i}\xi,e_j)
+
g(\nabla_{e_j}\xi,e_i)
\right)
=
\lambda\delta_{ij},
\quad \forall i,j=1,2,3,
\end{equation}
for some constant $\lambda$.
Consider the left-invariant orthonormal frame
\[
e_1={\rm e}^{mt}\frac{\partial}{\partial x},
\quad
e_2={\rm e}^{-nt}\frac{\partial}{\partial y},
\quad
e_3=\frac{\partial}{\partial t}.
\]
The non-zero covariant derivatives of the Levi--Civita connection are
\[
\nabla_{e_1}e_1=me_3,\quad
\nabla_{e_1}e_3=-me_1,
\]
\[
\nabla_{e_2}e_2=-ne_3,\quad
\nabla_{e_2}e_3=ne_2 .
\]
Moreover, the Ricci tensor is diagonal and given by
\[
\bigl(\operatorname{Ric}_{ij}\bigr)=
\begin{pmatrix}
-m(m-n) & 0 & 0\\
0 & n(m-n) & 0\\
0 & 0 & -(m^2+n^2)
\end{pmatrix}.
\]
Put
$
\beta_{ij}=g(\nabla_{e_i}\xi,e_j).
$
Using the above connection coefficients, we obtain
\[
(\beta_{ij})=
\begin{pmatrix}
e_1(\alpha_1)-m\alpha_3&
e_1(\alpha_2)&
e_1(\alpha_3)+m\alpha_1\\
e_2(\alpha_1)&
e_2(\alpha_2)+n\alpha_3&
e_2(\alpha_3)-n\alpha_2\\
e_3(\alpha_1)&
e_3(\alpha_2)&
e_3(\alpha_3)
\end{pmatrix}.
\]
Substituting this expression and the Ricci tensor into equation
\eqref{eq2.1}, we obtain the following system
\[
\begin{cases}
e_1(\alpha_1)-m\alpha_3-m(m-n)=\lambda,\\[2mm]
e_2(\alpha_2)+n\alpha_3+n(m-n)=\lambda,\\[2mm]
e_3(\alpha_3)-(m^2+n^2)=\lambda,\\[2mm]
e_1(\alpha_2)+e_2(\alpha_1)=0,\\[2mm]
e_1(\alpha_3)+e_3(\alpha_1)+m\alpha_1=0,\\[2mm]
e_2(\alpha_3)+e_3(\alpha_2)-n\alpha_2=0.
\end{cases}
\]
From the third equation of the system, $e_3(\alpha_3)-(m^2+n^2)=\lambda,$
and since $e_3=\partial_t,$ we obtain $\partial_t\alpha_3=\lambda+m^2+n^2.$
Integrating with respect to $t$ yields
\[
\alpha_3=(\lambda+m^2+n^2)t+H(x,y),
\]
where $H$ is independent of $t$. Next, the second equation of the system gives
$e^{-nt}\partial_y\alpha_2+n\alpha_3+n(m-n)=\lambda,$ or equivalently,
$\partial_y\alpha_2=e^{nt}(\lambda-n\alpha_3-n(m-n)).$
Differentiating with respect to $t$, we obtain
\[\partial_t\partial_y\alpha_2=ne^{nt}(\lambda-n\alpha_3-n(m-n))-ne^{nt}\partial_t\alpha_3.\]
Using $\partial_t\alpha_3=\lambda+m^2+n^2,$ it follows that
\[\partial_t\partial_y\alpha_2=ne^{nt}\left(\lambda-n\alpha_3-n(m-n)-(\lambda+m^2+n^2)\right).\]
Now differentiate the sixth equation of the system,
$e_2(\alpha_3)+e_3(\alpha_2)-n\alpha_2=0,$ with respect to $y$,
$e^{-nt}\partial_y^2\alpha_3+\partial_t\partial_y\alpha_2-n\partial_y\alpha_2=0.$
Since $\alpha_3=(\lambda+m^2+n^2)t+H(x,y),$ we have $\partial_y^2\alpha_3=H_{yy}.$
Substituting the above expressions for $\partial_y\alpha_2$
and $\partial_t\partial_y\alpha_2$ gives
\[
\begin{aligned}
e^{-nt}H_{yy}
&+ne^{nt}\left(\lambda-n\alpha_3-n(m-n)-(\lambda+m^2+n^2)\right)  \\
&-ne^{nt}\left(\lambda-n\alpha_3-n(m-n)\right)=0.
\end{aligned}
\]
After simplification, all terms involving $\alpha_3$ cancel, and we obtain
\[e^{-nt}H_{yy}=ne^{nt}(\lambda+m^2+n^2).\]
Since the left-hand side is proportional to $e^{-nt}$ while the right-hand side
is proportional to $e^{nt}$, the identity holds for every $t$ only if
$\lambda=-m^2-n^2.$ Consequently, $H_{yy}=0.$
Applying the same argument to the fifth equation of the system,
$e_1(\alpha_3)+e_3(\alpha_1)+m\alpha_1=0,$
one similarly obtains $H_{xx}=0.$ Therefore,
$H(x,y)=Axy+Bx+Cy+D,$ where $A,B,C,D$ are constants.
Next, from the first equation of the system,
$e^{mt}\partial_x\alpha_1=\lambda+m\alpha_3+m(m-n)$,
we obtain $\partial_x\alpha_1=e^{-mt}\left(\lambda+m(m-n)+mH\right).$
Integrating with respect to $x$ gives
\[\alpha_1=e^{-mt}\left[(\lambda+m(m-n))x+m\int H(x,y)\,dx\right]+C_6(y,t).\]
Since $H=Axy+Bx+Cy+D,$ we have
$\int H\,dx=\frac{A}{2}x^2y+\frac{B}{2}x^2+Cxy+Dx.$
Therefore,
$$\alpha_1=e^{-mt}\left[(\lambda+m(m-n)+mD)x+mCxy+\frac{mB}{2}x^2+\frac{mA}{2}x^2y\right]+C_6(y,t).$$
Similarly, the second equation gives
$\partial_y\alpha_2=e^{nt}(\lambda-nH-n(m-n)).$
Integrating with respect to $y$,
\[\alpha_2=e^{nt}\left[(\lambda-n(m-n))y-n\int H(x,y)\,dy\right]+C_5(x,t),\]
and $\int H\,dy=\frac{A}{2}xy^2+Bxy+\frac{C}{2}y^2+Dy.$ Hence
\[\alpha_2=e^{nt}\left[(\lambda-n(m-n)-nD)y-nBxy-\frac{nC}{2}y^2-\frac{nA}{2}xy^2\right]+C_5(x,t).\]
Substituting the above expressions into the fourth equation,
$e_1(\alpha_2)+e_2(\alpha_1)=0,$ yields
$A=B=C=0.$ Therefore, $H(x,y)=D,$ where $D$ is constant.
Substituting this expression into the fifth equation of the system gives
$C_6'(t)=-mC_6(t),$ whose solution is $C_6(t)=c_6e^{-mt},$
where $c_6$ is a constant. Substituting into the sixth equation yields
$C_5'(t)=nC_5(t),$ so that $C_5(t)=c_5e^{nt},$ where $c_5$ is constant.
Consequently,
\[
\begin{aligned}
\alpha_1&=e^{-mt}\left[(\lambda+m(m-n)+mD)x+c_6\right],\\
\alpha_2&=e^{nt}\left[(\lambda-n(m-n)-nD)y+c_5\right],\\
\alpha_3&=D.
\end{aligned}
\]
\end{proof}

%%%%%%%%%%%%%%%%%%%%%%%%%%%%%%%%%%%%%%%%%%%%%%%%%%%%%%%%%%%%%%%%%%%%%%%%%%%%%%%%%%%%%%%%%%%%%%%%%%%%%%%%%%%%%%

\begin{proposition}
The Ricci soliton vector field $\xi$ is non-gradient.
\end{proposition}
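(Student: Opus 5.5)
The plan is to work in the global coordinates $(x,y,t)$ of $\mathrm{Sol}^3_{m,n}\cong\mathbb{R}^3$. A vector field on a simply connected manifold is a gradient if and only if its metrically dual $1$-form is closed, so it suffices to show that the $1$-form $\xi^\flat=g(\xi,\cdot)$ attached to the field of Theorem \ref{th1} is never closed, for any choice of $c_1,c_2,c_3$.

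First I will rewrite $\xi$ in coordinate form. Since ${\rm e}^{-mt}e_1=\partial_x$ and ${\rm e}^{nt}e_2=\partial_y$, Theorem \ref{th1} gives
\[
\xi=\bigl[(mc_1-mn-n^2)x+c_2\bigr]\partial_x-\bigl[(nc_1+mn+m^2)y+c_3\bigr]\partial_y+c_1\partial_t .
\]
Lowering the index with $g={\rm e}^{-2mt}dx^2+{\rm e}^{2nt}dy^2+dt^2$ then yields
\[
\xi^\flat={\rm e}^{-2mt}\bigl[(mc_1-mn-n^2)x+c_2\bigr]dx-{\rm e}^{2nt}\bigl[(nc_1+mn+m^2)y+c_3\bigr]dy+c_1\,dt .
\]

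Next I will compute $d\xi^\flat$.
\begin{itemize}
\item The $dx\wedge dy$ component vanishes, because the $dx$-coefficient does not depend on $y$ and the $dy$-coefficient does not depend on $x$.
\item The $dt\wedge dx$ component is $-2m\,{\rm e}^{-2mt}\bigl[(mc_1-mn-n^2)x+c_2\bigr]$, since $c_1$ is constant.
\item The $dt\wedge dy$ component is $-2n\,{\rm e}^{2nt}\bigl[(nc_1+mn+m^2)y+c_3\bigr]$.
\end{itemize}
Closedness therefore forces, for all $x,y$,
\[
mc_1=mn+n^2,\quad c_2=0,\quad nc_1=-(mn+m^2),\quad c_3=0 .
\]

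The final step is to observe that these conditions are incompatible. Because $m,n>0$, the first equation gives $c_1=n(m+n)/m>0$, while the third gives $c_1=-m(m+n)/n<0$, which is a contradiction. Hence $\xi^\flat$ is never closed, and $\xi$ is never the gradient of a function. One could alternatively argue through \eqref{eq1.7} by testing the Hessian symmetry, but the closedness argument is the cleanest route.

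The only delicate point I expect is bookkeeping: converting between the frame $\{e_i\}$ and the coordinate vector fields with the correct exponential factors, and lowering the index correctly. Once that is done, the sign contradiction uses only the positivity of $m$ and $n$.
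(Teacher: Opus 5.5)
Your proposal is correct and is essentially the paper's argument. The paper assumes $\xi=\mathrm{grad}f$, integrates $f_t=c_1$, and then requires $h_x$ and $h_y$ to be independent of $t$. That requirement is exactly the vanishing of the $dt\wedge dx$ and $dt\wedge dy$ components of $d\xi^\flat$ that you compute, and it yields the same incompatible conditions on $c_1$; the paper concludes via $(m+n)(m^2+n^2)=0$, you via the sign of $c_1$.
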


\begin{proof}
Assume, by contradiction, that $\xi$ is a gradient vector field. Then there
exists a smooth function $f$ such that
$
\xi=\mathrm{grad}f .
$
With respect to the coordinate frame, the gradient of $f$ is given by
\[
\mathrm{grad}f=g^{ij}f_i\frac{\partial}{\partial x_j},
\]
where $f_i=\frac{\partial f}{\partial x_i}$, and $(g^{ij})$ is the inverse
of the metric matrix given by
\[
(g^{ij})=
\begin{pmatrix}
{\rm e}^{2mt}&0&0\\
0&{\rm e}^{-2nt}&0\\
0&0&1
\end{pmatrix}.
\]
Therefore, the gradient of $f$ is given by
\[
\mathrm{grad}f
=
{\rm e}^{2mt}f_x\frac{\partial}{\partial x}
+
{\rm e}^{-2nt}f_y\frac{\partial}{\partial y}
+
f_t\frac{\partial}{\partial t}.
\]
Note that, the vector field $\xi$ can be written in the coordinate frame as
\[
\xi=
\left[\left(mc_1-mn-n^2\right)x+c_2\right]\frac{\partial}{\partial x}
-
\left[\left(nc_1+mn+m^2\right)y+c_3\right]\frac{\partial}{\partial y}
+c_1\frac{\partial}{\partial t}.
\]
Comparing the components of $\xi$ and $\mathrm{grad}f$, we obtain
\begin{align}
{\rm e}^{2mt}f_x
&=
\left(mc_1-mn-n^2\right)x+c_2, \label{E1}\\
{\rm e}^{-2nt}f_y
&=
-\left[\left(nc_1+mn+m^2\right)y+c_3\right], \label{E2}\\
f_t&=c_1. \label{E3}
\end{align}
From \eqref{E3}, we have
$
f=c_1t+h(x,y),
$
where $h$ is independent of $t$. Hence, from \eqref{E1},
$
h_x=
{\rm e}^{-2mt}\left[\left(mc_1-mn-n^2\right)x+c_2\right].
$
Since $h_x$ depends only on $x$ and $y$, the right-hand side must be
independent of $t$. Differentiating with respect to $t$, we obtain
\[
0=
-2m {\rm e}^{-2mt}
\left[\left(mc_1-mn-n^2\right)x+c_2\right].
\]
This equality must hold for arbitrary values of $x$, which implies
$mc_1-mn-n^2=0,$ and $ c_2=0.$
Hence,
$
c_1=\frac{mn+n^2}{m}.
$
On the other hand, equation \eqref{E2} gives
$
h_y=
-{\rm e}^{2nt}
\left[\left(nc_1+mn+m^2\right)y+c_3\right].
$
Since $h_y$ is also independent of $t$, differentiating with respect to
$t$ yields
\[
0=
-2n {\rm e}^{2nt}
\left[\left(nc_1+mn+m^2\right)y+c_3\right].
\]
We obtain
$nc_1+mn+m^2=0$ and $c_3=0,$
and consequently
$
c_1=-\frac{mn+m^2}{n}.
$
Therefore, the existence of a smooth function $f$ satisfying
$\xi=\mathrm{grad}f$ would require simultaneously
\[
\frac{mn+n^2}{m}
=
-\frac{mn+m^2}{n}.
\]
Multiplying by $mn$ gives
$
n(mn+n^2)=-m(mn+m^2),
$
or equivalently,
$
n^2(m+n)=-m^2(m+n),
$
which implies
$
(m+n)(m^2+n^2)=0.
$
This is impossible. Hence no
such smooth function $f$ exists, and the Ricci soliton vector field $\xi$
is non-gradient.
\end{proof}

%%%%%%%%%%%%%%%%%%%%%%%%%%%%%%%%%%%%%%%%%%%%%%%%%%%%%%%%%%%%%%%%%%%%%%%%%%%%%%%%%%%%%%%%%%%%%%%%%%%%%%%%%%%%%%%%

\section{Applications to Harmonic Maps}

\begin{proposition}
The components
\[
\xi_1=\left(mc_1-mn-n^2\right)x+c_2,\quad
\xi_2=-\left(nc_1+mn+m^2\right)y-c_3,\quad
\xi_3=c_1,
\]
of the Ricci soliton vector field $\xi$ are harmonic functions on $(Sol^3_{m,n},g)$.
\end{proposition}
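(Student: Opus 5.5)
The plan is to compute the Laplace--Beltrami operator of $(\mathrm{Sol}^3_{m,n},g)$ explicitly on functions and then apply it to each of $\xi_1,\xi_2,\xi_3$. A function $f$ is harmonic exactly when $\tau(f)=0$ in \eqref{eq1.2*} with target $\mathbb{R}$, that is,
\[
\Delta f=\sum_{i=1}^3\Bigl(e_i\bigl(e_i(f)\bigr)-\bigl(\nabla_{e_i}e_i\bigr)(f)\Bigr)=0 .
\]
So I will first express $\Delta$ in the coordinates $(x,y,t)$, using the orthonormal frame \eqref{eq1.13} and the connection coefficients found in the proof of Theorem \ref{th1}.

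First I compute the second-order part. Since $e_1={\rm e}^{mt}\partial_x$ and ${\rm e}^{mt}$ does not depend on $x$, we get $e_1(e_1 f)={\rm e}^{2mt}f_{xx}$. In the same way $e_2(e_2 f)={\rm e}^{-2nt}f_{yy}$ and $e_3(e_3f)=f_{tt}$.

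Next I compute the first-order part. From $\nabla_{e_1}e_1=me_3$, $\nabla_{e_2}e_2=-ne_3$ and $\nabla_{e_3}e_3=0$, we have $\sum_i\nabla_{e_i}e_i=(m-n)e_3$. Hence
\[
\Delta f={\rm e}^{2mt}f_{xx}+{\rm e}^{-2nt}f_{yy}+f_{tt}-(m-n)f_t .
\]
As a consistency check, this agrees with the divergence form $\frac{1}{\sqrt{\det g}}\partial_i\bigl(\sqrt{\det g}\,g^{ij}\partial_j f\bigr)$, because $\sqrt{\det g}={\rm e}^{(n-m)t}$ and $(g^{ij})$ was already computed in the previous proposition.

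Finally I apply this formula to each component:
\begin{itemize}
\item $\xi_1$ is affine in $x$ alone, so $f_{xx}=f_{yy}=f_{tt}=f_t=0$.
\item $\xi_2$ is affine in $y$ alone, so again every term vanishes.
\item $\xi_3=c_1$ is constant, so $\Delta\xi_3=0$ trivially.
\end{itemize}
Thus all three components are harmonic.

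The only step needing any care is the first-order term $(m-n)f_t$, and in particular its sign. It plays no role in the end, since none of the components depends on $t$. The substantive point is that the $t$-dependent prefactors ${\rm e}^{-mt}$ and ${\rm e}^{nt}$ in the frame expression of $\xi$ are absorbed by the frame: the coefficients in the coordinate frame, $\xi_1,\xi_2,\xi_3$, are exactly the affine functions listed in the statement, as already recorded in the proof of the previous proposition.
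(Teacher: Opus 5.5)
Your proof is correct and is essentially the paper's argument. The paper also applies $\Delta f=\sum_i\bigl(e_i(e_i f)-(\nabla_{e_i}e_i)f\bigr)$ with the computed connection coefficients; you make this explicit through the correct coordinate formula $\Delta f={\rm e}^{2mt}f_{xx}+{\rm e}^{-2nt}f_{yy}+f_{tt}+(n-m)f_t$, which vanishes on functions that are affine in $x$ alone, affine in $y$ alone, or constant.
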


\begin{proof}
The result follows directly from the definition of the Laplace operator.
For any smooth function $f$ on $(Sol^3_{m,n},g)$, we have
\[
\Delta(f)
=
e_i(e_i(f))
-(\nabla_{e_i}e_i)(f).
\]
Applying this formula to each component $\xi_j$, $j=1,2,3$, and using
the expressions of the Levi-Civita connection, we obtain
$
\Delta(\xi_1)=\Delta(\xi_2)=\Delta(\xi_3)=0.
$
Therefore, each component of the Ricci soliton vector field $\xi$ is a
harmonic function on $(Sol^3_{m,n},g)$.
\end{proof}

\begin{thm}\label{th2}
Every harmonic map from a compact orientable Riemannian manifold without
boundary into the Riemannian manifold $(Sol^3_{m,n},g)$ is constant.
\end{thm}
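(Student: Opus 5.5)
The plan is to run the divergence argument behind the Liouville theorem of \cite{cherif}, but to handle the degenerate direction by hand. The strict hypothesis of \cite{cherif} is not satisfied here. With $\lambda=-m^2-n^2$ from Theorem~\ref{th1}, the tensor $\operatorname{Ric}-\lambda g$ is diagonal in $\{e_1,e_2,e_3\}$ with entries
\[
mn+n^2,\qquad m^2+mn,\qquad 0 .
\]
So it is only positive semi-definite, vanishing exactly along $e_3$. I will extract what the integral identity gives and then finish with a separate argument in the $e_3$ direction.

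\emph{Integral identity.} Let $\varphi:(M,\gamma)\to(\mathrm{Sol}^3_{m,n},g)$ be harmonic with $M$ compact, orientable and without boundary. Take any soliton field $\xi$ from Theorem~\ref{th1}, for instance with $c_1=c_2=c_3=0$. Define the $1$-form $\eta(X)=g(\xi\circ\varphi,d\varphi(X))$ on $M$. For a local orthonormal frame $\{\epsilon_a\}$ on $M$,
\[
\operatorname{div}\eta=g\bigl(\nabla_{d\varphi(\epsilon_a)}\xi,d\varphi(\epsilon_a)\bigr)+g(\xi,\tau(\varphi))
=\tfrac12(\mathcal L_\xi g)\bigl(d\varphi(\epsilon_a),d\varphi(\epsilon_a)\bigr),
\]
since $\tau(\varphi)=0$. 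By the soliton equation \eqref{eq1.6} this equals $(\lambda g-\operatorname{Ric})(d\varphi(\epsilon_a),d\varphi(\epsilon_a))$. Integrating over $M$ with the divergence theorem gives
\[
\int_M \sum_a\Bigl[(mn+n^2)\,g(d\varphi(\epsilon_a),e_1)^2+(m^2+mn)\,g(d\varphi(\epsilon_a),e_2)^2\Bigr]v^\gamma=0 .
\]
Both coefficients are positive because $m,n>0$. Hence $d\varphi(\epsilon_a)\in\operatorname{span}\{e_3\}$ everywhere. Equivalently, $d(x\circ\varphi)=d(y\circ\varphi)=0$, and since $M$ is connected, $x\circ\varphi$ and $y\circ\varphi$ are constant.

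\emph{Remaining direction.} Now $\varphi$ takes values in the curve $t\mapsto(x_0,y_0,t)$. This curve is a unit-speed geodesic, because $\nabla_{e_3}e_3=0$. Write $\varphi=\sigma\circ u$ with $u=t\circ\varphi$ and $\sigma$ this geodesic. The composition formula gives $\tau(\varphi)=d\sigma(\Delta u)+\nabla d\sigma(du,du)=(\Delta u)\,e_3$. Therefore $u$ is a harmonic function on the compact manifold $M$, so it is constant, and $\varphi$ is constant.

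The main obstacle is the degeneracy of $\operatorname{Ric}-\lambda g$ along $e_3$, which is why \cite{cherif} cannot be quoted directly. The key checks are the sign of the two nonzero eigenvalues (they use $m,n>0$) and the observation that the kernel direction $e_3$ is tangent to geodesics; together these reduce the problem to harmonic functions on a compact manifold.
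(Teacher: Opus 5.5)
Your proof is correct, and it is more complete than the paper's. The paper computes $\operatorname{Ric}(V,V)-\lambda g(V,V)=n(m+n)V_1^2+m(m+n)V_2^2\geq0$ in \eqref{eq2.7} and then quotes Proposition~9 of \cite{cherif}. But the paper itself recalls in the introduction that this result assumes the strict condition $\operatorname{Ric}>\lambda h$ (or $<$). That condition fails here, because the form vanishes on $e_3$.

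You instead reprove the divergence identity $\operatorname{div}\eta=\sum_a(\lambda g-\operatorname{Ric})\bigl(d\varphi(\epsilon_a),d\varphi(\epsilon_a)\bigr)$ that underlies the cited result. From semi-definiteness you correctly extract only that $d\varphi$ takes values in $\operatorname{span}\{e_3\}$. You then treat the kernel direction separately:
\begin{itemize}
\item $t$ is a global coordinate and the $t$-lines are unit-speed geodesics;
\item so $\varphi=\sigma\circ u$, with $u=t\circ\varphi$ a genuine real-valued function;
\item hence $\tau(\varphi)=(\Delta u)e_3$, and $u$ is a harmonic function on a compact manifold, so it is constant.
\end{itemize}

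This second step is exactly what the paper's one-line citation skips. It cannot be replaced by a pointwise curvature condition. Take $S^1\times C$, with $C$ Hamilton's cigar steady soliton. There $\operatorname{Ric}-\lambda g\geq 0$ degenerates exactly along the circle factor, and the closed geodesic $S^1\times\{p\}$ gives a non-constant harmonic map from $S^1$. What makes $\mathrm{Sol}^3_{m,n}$ work is the global fact you use: the kernel leaves are embedded lines parametrized by a global function.

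The paper's route is shorter but, as written, incomplete; yours is self-contained. The connectedness of $M$ that you use is implicit in the statement as well.
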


\begin{proof}
Let $V=V_1e_1+V_2e_2+V_3e_3$ be an arbitrary vector field on
$Sol^3_{m,n}$. Using the value of the soliton constant
$
\lambda=-m^2-n^2,
$
we obtain
\begin{equation}\label{eq2.7}
\operatorname{Ric}(V,V)-\lambda g(V,V)
=
n(m+n)V_1^2+m(m+n)V_2^2\geq0 .
\end{equation}
Hence, the target manifold $(Sol^3_{m,n},g)$ satisfies the required
curvature condition. The Theorem \ref{th2} follows from \eqref{eq2.7} and Proposition $9$ in \cite{cherif}.
\end{proof}

\begin{thm}
Let
$
\varphi:(Sol^3_{m,n},g)\longrightarrow \mathbb{R}^{d}
$
be the smooth map defined by
\[
\varphi(x,y,t)=
\begin{pmatrix}
a_{11}x+a_{12}y+a_{13}t\\
a_{21}x+a_{22}y+a_{23}t\\
\vdots\\
a_{d1}x+a_{d2}y+a_{d3}t
\end{pmatrix},
\]
where $a_{ij}\in\mathbb{R}$. Then $\varphi$ is a harmonic map if and only if
$
m=n.
$
\end{thm}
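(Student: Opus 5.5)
The plan is to compute the tension field directly from \eqref{eq1.2*}. Since the target is $\mathbb{R}^d$ with its flat metric, the pull-back connection is ordinary differentiation of components. Hence $\tau(\varphi)$ is the vector $(\Delta\varphi^1,\dots,\Delta\varphi^d)$, where $\varphi^k=a_{k1}x+a_{k2}y+a_{k3}t$ and
\[
\Delta f=e_i(e_i(f))-(\nabla_{e_i}e_i)(f)
\]
is the Laplacian already used in the proof of the harmonicity of the components of $\xi$. So $\varphi$ is harmonic if and only if every component $\varphi^k$ is a harmonic function on $(Sol^3_{m,n},g)$.

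First, I compute the second-order part. From the frame \eqref{eq1.13},
\[
e_1(\varphi^k)={\rm e}^{mt}a_{k1},\qquad e_2(\varphi^k)={\rm e}^{-nt}a_{k2},\qquad e_3(\varphi^k)=a_{k3}.
\]
The first two depend only on $t$, and $e_1,e_2$ differentiate only in $x,y$, while the third is constant. Therefore $e_i(e_i(\varphi^k))=0$ for each $i$.

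Second, I use the connection coefficients from the proof of Theorem \ref{th1}: $\nabla_{e_1}e_1=me_3$, $\nabla_{e_2}e_2=-ne_3$ and $\nabla_{e_3}e_3=0$. These give $\sum_i\nabla_{e_i}e_i=(m-n)e_3$, and consequently
\[
\Delta\varphi^k=-(m-n)\,a_{k3},\qquad
\tau(\varphi)=-(m-n)\bigl(a_{13},\dots,a_{d3}\bigr).
\]

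Finally, I read off the equivalence. If $m=n$, then $\tau(\varphi)=0$ for all coefficients. Conversely, if $\tau(\varphi)=0$, then $(m-n)a_{k3}=0$ for every $k$, so $m=n$ as soon as some $a_{k3}\neq0$. The only point needing care is this hypothesis. When all $a_{k3}=0$, the map is harmonic for every $m,n$, so the stated ``only if'' must be understood for maps that genuinely depend on $t$, i.e.\ $(a_{13},\dots,a_{d3})\neq0$. I would state this nondegeneracy assumption explicitly in the proof. Otherwise the computation is routine and I expect no real obstacle.
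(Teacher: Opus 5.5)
Your proof is correct and follows the paper's argument. You reduce harmonicity to $\Delta\varphi^k=0$, use $\sum_i\nabla_{e_i}e_i=(m-n)e_3$ and $e_i(e_i(\varphi^k))=0$ to get $\Delta\varphi^k=(n-m)a_{k3}$, and read off the equivalence. Your nondegeneracy assumption $(a_{13},\dots,a_{d3})\neq0$ is what the paper's phrase ``for a general linear map (with arbitrary coefficients $a_{\alpha3}$)'' tacitly assumes, and stating it explicitly is the right way to make the ``only if'' direction literally true.
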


\begin{proof}
We have
$
\varphi=(\varphi^1,\varphi^2,\ldots,\varphi^d):
Sol^3_{m,n}\longrightarrow \mathbb{R}^{d},
$
where
\[
\varphi^\alpha(x,y,t)
=
a_{\alpha 1}x+a_{\alpha 2}y+a_{\alpha 3}t,
\quad \alpha=\overline{1,d}.
\]
Since the target manifold $\mathbb{R}^{d}$ is Euclidean, the tension field
of $\varphi$ is given by
\[
\tau(\varphi)
=
\left(\Delta\varphi^1,\Delta\varphi^2,\ldots,\Delta\varphi^d\right).
\]
Therefore, $\varphi$ is harmonic if and only if
$
\Delta\varphi^\alpha=0,
$
for $\alpha=\overline{1,d}$. The metric of $Sol^3_{m,n}$ is
$
g={\rm e}^{-2mt}dx^2+{\rm e}^{2nt}dy^2+dt^2,
$
and an orthonormal frame is given by
\[
e_1={\rm e}^{mt}\frac{\partial}{\partial x},
\quad
e_2={\rm e}^{-nt}\frac{\partial}{\partial y},
\quad
e_3=\frac{\partial}{\partial t}.
\]
For a function $f$ on $Sol^3_{m,n}$, the Laplacian is
$\Delta f
=
e_i(e_i(f))-(\nabla_{e_i}e_i)(f).
$
Using the Levi-Civita connection of $(Sol^3_{m,n},g)$, we have
\[
\nabla_{e_1}e_1=me_3,\quad
\nabla_{e_2}e_2=-ne_3,\quad
\nabla_{e_3}e_3=0.
\]
Hence,
$\Delta f
=
e_1(e_1(f))+e_2(e_2(f))+e_3(e_3(f))
-me_3(f)+ne_3(f).
$
Applying this formula to the coordinate functions, we obtain
$\Delta x=-m{\rm e}^{2mt}\frac{\partial t}{\partial x}+0=0,$
$\Delta y=0,$ and $\Delta t=n-m$.
Consequently, for each component of $\varphi$,
\[
\Delta\varphi^\alpha
=
a_{\alpha1}\Delta x
+a_{\alpha2}\Delta y
+a_{\alpha3}\Delta t
=
a_{\alpha3}(n-m).
\]
Therefore, the tension field of $\varphi$ is given by
\[
\tau(\varphi)
=
(n-m)
\begin{pmatrix}
a_{13}\\
a_{23}\\
\vdots\\
a_{d3}
\end{pmatrix}.
\]
Thus, the tension field vanishes if and only if
$(n-m)a_{\alpha3}=0,$
for $\alpha=\overline{1,d}.$
For a general linear map $\varphi$ (with arbitrary coefficients
$a_{\alpha3}$), this condition is equivalent to
$m=n.$
Hence, $\varphi$ is a harmonic map if and only if
$m=n.$
\end{proof}

\section{Geodesics on $(Sol^{3}_{m,n},g)$}

\begin{thm}\label{thm-geodesic}
Let
$
\gamma:I\subseteq\mathbb{R}\longrightarrow (Sol^{3}_{m,n},g),
$
$
\gamma(s)=(x(s),y(s),t(s)),
$
be a curve. Then, $\gamma$ is a geodesic curve if and only if its coordinate functions satisfy the following system of differential equations
\[
\begin{cases}
x''-2m\,t'x'=0,\\[2mm]
y''+2n\,t'y'=0,\\[2mm]
t''+m {\rm e}^{-2mt}(x')^2-n {\rm e}^{2nt}(y')^2=0.
\end{cases}
\]
\end{thm}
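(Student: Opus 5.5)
I will derive the geodesic equations $\nabla_{\gamma'}\gamma'=0$ by expanding $\gamma'$ in the left-invariant orthonormal frame \eqref{eq1.13} and using the Levi--Civita connection computed in the proof of Theorem \ref{th1}. The alternative is to compute the Christoffel symbols of $g={\rm e}^{-2mt}dx^2+{\rm e}^{2nt}dy^2+dt^2$ directly; I will keep that as a cross-check.

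First I write
\[
\gamma'=x'\partial_x+y'\partial_y+t'\partial_t
={\rm e}^{-mt}x'\,e_1+{\rm e}^{nt}y'\,e_2+t'\,e_3 .
\]
Set $u_1={\rm e}^{-mt}x'$, $u_2={\rm e}^{nt}y'$ and $u_3=t'$, where all functions are evaluated along $\gamma$. Then
\[
\nabla_{\gamma'}\gamma'=\sum_k u_k'\,e_k+\sum_{i,j}u_iu_j\,\nabla_{e_i}e_j .
\]
The nonzero connection terms are $\nabla_{e_1}e_1=me_3$, $\nabla_{e_1}e_3=-me_1$, $\nabla_{e_2}e_2=-ne_3$ and $\nabla_{e_2}e_3=ne_2$. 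These give
\[
\nabla_{\gamma'}\gamma'=(u_1'-m u_1u_3)e_1+(u_2'+n u_2u_3)e_2+(u_3'+m u_1^2-n u_2^2)e_3 .
\]

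Next I substitute back. We have $u_1'={\rm e}^{-mt}(x''-m t'x')$, so the $e_1$-component equals ${\rm e}^{-mt}(x''-2m t'x')$. Similarly, $u_2'={\rm e}^{nt}(y''+n t'y')$, so the $e_2$-component equals ${\rm e}^{nt}(y''+2n t'y')$. The $e_3$-component is $t''+m{\rm e}^{-2mt}(x')^2-n{\rm e}^{2nt}(y')^2$. Because the exponential factors never vanish and the frame is orthonormal, $\nabla_{\gamma'}\gamma'=0$ holds if and only if the three displayed equations hold. Both directions follow at once.

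The only delicate point is the sign convention for the connection, which I will confirm against the Christoffel symbols. These are $\Gamma^x_{xt}=-m$, $\Gamma^y_{yt}=n$, $\Gamma^t_{xx}=m{\rm e}^{-2mt}$ and $\Gamma^t_{yy}=-n{\rm e}^{2nt}$. The equations $\ddot x^k+\Gamma^k_{ij}\dot x^i\dot x^j=0$ then reproduce the system exactly, with the factor $2$ coming from the symmetric mixed terms.
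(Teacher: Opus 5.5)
Your proposal is correct and follows the paper's proof essentially verbatim. Like the paper, you expand $\gamma'$ in the orthonormal frame with $u_1={\rm e}^{-mt}x'$, $u_2={\rm e}^{nt}y'$, $u_3=t'$, apply the four nonzero connection terms, and substitute back. Your remarks that the nonvanishing exponential factors make the equivalence two-sided, and your Christoffel-symbol cross-check, are small additions the paper leaves implicit.
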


\begin{proof}
The tension field of the curve $\gamma$ is given by
\[
\tau(\gamma)=\nabla^{\gamma}_{\frac{d}{dt}}\gamma',
\]
and $\gamma$ is a geodesic if and only if $\tau(\gamma)=0$. Using the orthonormal frame
$\{e_1,e_2,e_3\}$, along $\gamma$ we get
$
\gamma'=u_1(e_1\circ\gamma)+u_2(e_2\circ\gamma)+u_3(e_3\circ\gamma),
$
where
\[
u_1={\rm e}^{-mt}x',\quad
u_2={\rm e}^{nt}y',\quad
u_3=t'.
\]
Therefore,
\[
\begin{aligned}
\tau(\gamma)
&=\nabla^{\gamma}_{\frac{d}{dt}}\left[u_1(e_1\circ\gamma)+u_2(e_2\circ\gamma)+u_3(e_3\circ\gamma)\right]\\
&=u_1'(e_1\circ\gamma)+u_2'(e_2\circ\gamma)+u_3'(e_3\circ\gamma)
+u_1\nabla_{\gamma'}e_1
+u_2\nabla_{\gamma'}e_2
+u_3\nabla_{\gamma'}e_3 .
\end{aligned}
\]
Using the non-zero covariant derivatives, we obtain
\[
\begin{aligned}
\tau(\gamma)
&=(u_1'-mu_1u_3)(e_1\circ\gamma)
+(u_2'+nu_2u_3)(e_2\circ\gamma)
 +(u_3'+mu_1^2-nu_2^2)(e_3\circ\gamma).
\end{aligned}
\]
Consequently, the condition $\tau(\gamma)=0$ is equivalent to
\[
\begin{cases}
u_1'-mu_1u_3=0,\\
u_2'+nu_2u_3=0,\\
u_3'+mu_1^2-nu_2^2=0.
\end{cases}
\]
Substituting
$u_1={\rm e}^{-mt}x',$
$u_2={\rm e}^{nt}y',$
and $u_3=t',$ we obtain the geodesic equations of $(Sol^3_{m,n},g)$.
\end{proof}

\begin{thm}\label{thm-integral}
Assume that $mc_1-mn-n^2\neq0$ and $nc_1+mn+m^2\neq0$. Then, the integral curves
$\gamma(s)=(x(s),y(s),t(s))$ of the Ricci soliton vector field $\xi$, satisfying $\gamma'(s)=\xi_{\gamma(s)}$, are given by
\[
\begin{cases}
x(s)=a_1{\rm e}^{(mc_1-mn-n^2)s}-\frac{c_2}{mc_1-mn-n^2},\\[2mm]
y(s)=a_2{\rm e}^{-(nc_1+mn+m^2)s}-\frac{c_3}{nc_1+mn+m^2},\\[2mm]
t(s)=c_1s+t_0,
\end{cases}
\]
where $a_1,a_2,t_0\in\mathbb{R}$. Moreover, all these integral curves are defined for
every $s\in\mathbb{R}$; consequently, the Ricci soliton vector field $\xi$ is complete.
\end{thm}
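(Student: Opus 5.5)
The plan is to rewrite the integral curve equation $\gamma'(s)=\xi_{\gamma(s)}$ in the coordinate frame $\{\partial_x,\partial_y,\partial_t\}$ and solve the resulting ODE system explicitly. In the proof that $\xi$ is non-gradient, $\xi$ was already expressed in coordinates as
\[
\xi=\left[\left(mc_1-mn-n^2\right)x+c_2\right]\frac{\partial}{\partial x}-\left[\left(nc_1+mn+m^2\right)y+c_3\right]\frac{\partial}{\partial y}+c_1\frac{\partial}{\partial t}.
\]
The exponential factors $e^{-mt}$ and $e^{nt}$ in Theorem \ref{th1} cancel against those in $e_1=e^{mt}\partial_x$ and $e_2=e^{-nt}\partial_y$. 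Writing $\gamma'=x'\partial_x+y'\partial_y+t'\partial_t$ and setting $A=mc_1-mn-n^2$, $B=nc_1+mn+m^2$, the integral curve condition becomes the decoupled linear system
\[
x'=Ax+c_2,\qquad y'=-(By+c_3),\qquad t'=c_1 .
\]

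Each equation is then solved separately.

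First, $t'=c_1$ integrates directly to $t(s)=c_1s+t_0$.

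Next, since $A\neq0$ by hypothesis, the $x$-equation has the equilibrium $-c_2/A$. The shifted function $x+c_2/A$ satisfies $(x+c_2/A)'=A(x+c_2/A)$, so $x(s)=a_1e^{As}-c_2/A$.

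Likewise, since $B\neq0$, the shifted function $y+c_3/B$ satisfies $(y+c_3/B)'=-B(y+c_3/B)$, so $y(s)=a_2e^{-Bs}-c_3/B$.

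By uniqueness of solutions to these linear ODEs, every integral curve has this form. The constants $a_1,a_2,t_0$ are determined by the initial point $\gamma(0)=(a_1-c_2/A,\ a_2-c_3/B,\ t_0)$, and they range over all of $\mathbb{R}$ as the initial point varies.

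For completeness, the key observation is that the system is linear with constant coefficients. Its solutions are therefore entire functions of $s$, and the explicit formulas are defined for all $s\in\mathbb{R}$ with no blow-up. Since $Sol^3_{m,n}$ is diffeomorphic to $\mathbb{R}^3$ through the global coordinates $(x,y,t)$, each maximal integral curve is defined on all of $\mathbb{R}$, which is exactly completeness of $\xi$.

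There is no serious obstacle. The only point needing care is the passage from the orthonormal frame to the coordinate frame, namely checking that $\alpha_1e_1=\bigl(Ax+c_2\bigr)\partial_x$ and $\alpha_2e_2=-\bigl(By+c_3\bigr)\partial_y$ with the correct signs. I would also remark that the nonvanishing hypotheses are used only to obtain the stated closed form; if $A=0$ or $B=0$ the corresponding component is affine in $s$, and completeness still holds.
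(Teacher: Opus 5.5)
Your proposal is correct and follows the paper's proof. Like the paper, you rewrite $\xi$ in the coordinate frame, reduce $\gamma'=\xi_{\gamma}$ to the decoupled linear system $x'=Ax+c_2$, $y'=-By-c_3$, $t'=c_1$, and solve it; shifting by the equilibrium is the same computation as the paper's integrating factors. Your uniqueness-plus-global-chart argument for completeness makes explicit a step the paper leaves implicit.
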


\begin{proof}
The vector field $\xi$ can be expressed in the coordinate frame
\[
\begin{aligned}
\xi={}&
\left[\left(mc_1-mn-n^2\right)x+c_2\right]
\frac{\partial}{\partial x}
-\left[\left(nc_1+mn+m^2\right)y+c_3\right]
\frac{\partial}{\partial y}
+c_1\frac{\partial}{\partial t}.
\end{aligned}
\]
Let $\gamma(s)$ be an integral curve of $\xi$. Then
$
\gamma'(s)=\xi_{\gamma(s)}
$
is equivalent to the following system
\[
\begin{cases}
x'=\left(mc_1-mn-n^2\right)x+c_2,\\[2mm]
y'=-\left(nc_1+mn+m^2\right)y-c_3,\\[2mm]
t'=c_1.
\end{cases}
\]
Set
$A=mc_1-mn-n^2,$ and
$B=nc_1+mn+m^2.$
The first equation becomes
$
x'-Ax=c_2.
$
Using the integrating factor ${\rm e}^{-As}$, we obtain
\[
x(s)=a_1{\rm e}^{As}-\frac{c_2}{A}.
\]
Similarly, the second equation gives
$
y'+By=-c_3.
$
Multiplying by the integrating factor ${\rm e}^{Bs}$ yields
\[
y(s)=a_2{\rm e}^{-Bs}-\frac{c_3}{B}.
\]
Finally, the third equation is immediately integrated as
\[
t(s)=c_1s+t_0.
\]
\end{proof}

According to Theorem \ref{thm-geodesic}, we deduce that.

\begin{corollary}
Let $\gamma(s)=(x(s),y(s),t(s))$ be an integral curve of the Ricci soliton vector field $\xi$ given in Theorem \ref{thm-integral}. Then $\gamma$ is a geodesic of $(Sol^3_{m,n},g)$ if and only if
\[
\begin{cases}
a_{1}\left({m}^{2}c_{1}^{2}-{m}^{2}{n}^{2}-2m{n}^{3}-{n}^{4}\right)=0,\\[2mm]
a_{2}\left({n}^{2}c_{1}^{2}-{m}^{2}{n}^{2}-2{m}^{3}n-{m}^{4}\right)=0,\\[2mm]
ma_{1}^{2} \left( mc_{1}-mn-{n}^{2} \right)^{2}{\rm e}^{-2{n}^{2}s-2mt_{0}}
=na_{2}^{2}\left( nc_{{1}}+mn+{m}^{2} \right) ^{2}{\rm e}^{-2{m}^{2} s+2nt_{0}}.
\end{cases}
\]
\end{corollary}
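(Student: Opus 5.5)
The plan is to substitute the explicit integral curves of Theorem \ref{thm-integral} into the geodesic system of Theorem \ref{thm-geodesic}. Each of the three equations should then become an algebraic condition on the constants $a_1,a_2,c_1,t_0$, holding identically in $s$.

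Write $A=mc_1-mn-n^2$ and $B=nc_1+mn+m^2$, as in the proof of Theorem \ref{thm-integral}; both are nonzero by hypothesis. From the explicit solution we get
\[
x'=Aa_1{\rm e}^{As},\quad x''=A^2a_1{\rm e}^{As},\quad y'=-Ba_2{\rm e}^{-Bs},\quad y''=B^2a_2{\rm e}^{-Bs},\quad t'=c_1,\quad t''=0 .
\]

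The first geodesic equation $x''-2mt'x'=0$ becomes $a_1A(A-2mc_1){\rm e}^{As}=0$. Since $A-2mc_1=-(mc_1+mn+n^2)$, this reads $a_1(mc_1-mn-n^2)(mc_1+mn+n^2)=0$, that is, $a_1(m^2c_1^2-(mn+n^2)^2)=0$. Expanding $(mn+n^2)^2=m^2n^2+2mn^3+n^4$ gives the first stated condition; the nonvanishing factor $A$ may be kept or cancelled without affecting equivalence.

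The second equation $y''+2nt'y'=0$ gives $a_2B(B-2nc_1){\rm e}^{-Bs}=0$. Here $B-2nc_1=-(nc_1-mn-m^2)$, so the condition is $a_2(nc_1+mn+m^2)(nc_1-mn-m^2)=0$, that is, $a_2(n^2c_1^2-(mn+m^2)^2)=0$. This is the second stated condition.

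For the third equation, use $t''=0$ and $t=c_1s+t_0$. The equation becomes
\[
mA^2a_1^2{\rm e}^{(2A-2mc_1)s-2mt_0}=nB^2a_2^2{\rm e}^{(-2B+2nc_1)s+2nt_0}.
\]
The exponents simplify: $2A-2mc_1=-2mn-2n^2$ and $-2B+2nc_1=-2mn-2m^2$.

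I expect this third equation to be the only delicate point. The common factor ${\rm e}^{-2mns}$ appears on both sides and can be divided out, leaving exactly the exponents ${\rm e}^{-2n^2s-2mt_0}$ and ${\rm e}^{-2m^2s+2nt_0}$ of the stated third condition. So the bookkeeping of the exponents, and noting that this cancellation is legitimate, is the main thing to check.

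Since each step above is an equivalence, and $A,B\neq0$, $\gamma$ is a geodesic if and only if all three conditions hold, which proves the corollary.
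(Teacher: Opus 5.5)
Your proposal is correct and is the paper's route: the paper deduces the corollary by substituting the explicit integral curves of Theorem~\ref{thm-integral} into the geodesic system of Theorem~\ref{thm-geodesic}, without writing out the computation. Your factorizations $A-2mc_1=-(mc_1+mn+n^2)$ and $B-2nc_1=-(nc_1-mn-m^2)$ are correct, as is the exponent bookkeeping in the third equation, including cancelling the common factor ${\rm e}^{-2mns}$.
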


%%%%%%%%%%%%%%%%%%%%%%%%%%%%%%%%%%%%%%%%%%%%%%%%%%%%%%%%%%%%%%%%%%%%%%%%%%%%%%%%%%%%%%%%%%%%%%%%%%%%%%%%%%%%%%%
\section{Harmonic Vector Fields on $(Sol^{3}_{m,n},g)$}

Since the left-invariant Riemannian metric $g$ depends only on the coordinate
$t$, we consider harmonic vector fields whose components depend only on $t$.

\begin{thm}\label{th3}
Let
$
X=X_1(t)e_1+X_2(t)e_2+X_3(t)e_3
$
be a vector field on $Sol^3_{m,n}$. Then, $X$ is a harmonic section
with respect to the metric $g$ if and only if
\[
\begin{cases}
X_1''+(n-m)X_1'-m^2X_1=0,\\[2mm]
X_2''+(n-m)X_2'-n^2X_2=0,\\[2mm]
X_3''+(n-m)X_3'-(m^2+n^2)X_3=0.
\end{cases}
\]
\end{thm}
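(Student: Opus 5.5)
The plan is to compute the rough Laplacian $\overline{\Delta}X=\operatorname{Tr}_g\nabla^2X$ directly in the orthonormal frame $\{e_1,e_2,e_3\}$, using the Levi--Civita connection already listed in the proof of Theorem~\ref{th1}. We use
\[
\overline{\Delta}X=\sum_{i=1}^{3}\left(\nabla_{e_i}\nabla_{e_i}X-\nabla_{\nabla_{e_i}e_i}X\right).
\]
The key simplification is that the components $X_j$ depend only on $t$. Hence $e_1(X_j)=e_2(X_j)=0$ and $e_3(X_j)=X_j'$. We also use that $\nabla_{e_3}e_j=0$ for all $j$, since these derivatives do not appear among the non-zero ones listed.

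First we compute the first covariant derivatives:
\[
\nabla_{e_1}X=mX_1e_3-mX_3e_1,\qquad
\nabla_{e_2}X=nX_3e_2-nX_2e_3,\qquad
\nabla_{e_3}X=X_1'e_1+X_2'e_2+X_3'e_3.
\]
Next we differentiate once more along the same directions. The coefficients are again functions of $t$ only, so only the connection terms survive in the first two directions:
\[
\nabla_{e_1}\nabla_{e_1}X=-m^2X_1e_1-m^2X_3e_3,
\]
\[
\nabla_{e_2}\nabla_{e_2}X=-n^2X_2e_2-n^2X_3e_3,
\]
\[
\nabla_{e_3}\nabla_{e_3}X=X_1''e_1+X_2''e_2+X_3''e_3.
\]
The correction term comes from $\sum_i\nabla_{e_i}e_i=me_3-ne_3=(m-n)e_3$. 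It contributes $-\nabla_{(m-n)e_3}X=-(m-n)(X_1'e_1+X_2'e_2+X_3'e_3)$.

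Summing these pieces gives
\[
\overline{\Delta}X=\bigl(X_1''+(n-m)X_1'-m^2X_1\bigr)e_1+\bigl(X_2''+(n-m)X_2'-n^2X_2\bigr)e_2+\bigl(X_3''+(n-m)X_3'-(m^2+n^2)X_3\bigr)e_3.
\]
The harmonic-section condition $\overline{\Delta}X=0$ is therefore equivalent to the stated system, by linear independence of $e_1,e_2,e_3$.

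There is no real obstacle here. The only care needed is bookkeeping: the signs in $\nabla_{e_1}e_3=-me_1$ versus $\nabla_{e_2}e_3=ne_2$ must be tracked correctly. One must also remember that the mixed terms such as $\nabla_{e_1}(X_3e_1)$ reintroduce an $e_3$ component; this is what produces the combined coefficient $-(m^2+n^2)$ in the third equation.
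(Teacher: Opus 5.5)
Your proposal is correct and follows the paper's proof essentially step for step. You compute the same first and second covariant derivatives in the frame $\{e_1,e_2,e_3\}$ and arrive at the same final expression for $\overline{\Delta}X$. The only cosmetic difference is in the correction term: you first combine $\sum_i\nabla_{e_i}e_i=(m-n)e_3$, whereas the paper lists the three terms $\nabla_{\nabla_{e_i}e_i}X$ separately.
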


\begin{proof}
Let
$
\theta_{ij}=g(\nabla_{e_i}X,e_j),
$
for $i,j=\overline{1,3}.$
Using the Levi-Civita connection of $(Sol^3_{m,n},g)$,
we obtain
\begin{eqnarray*}
\nabla_{e_1}X&=&-mX_3e_1+mX_1e_3,\\
\nabla_{e_2}X&=&nX_3e_2-nX_2e_3,\\
\nabla_{e_3}X&=&X_{1}^{'}e_1+X_{2}^{'}e_2+X_{3}^{'}e_3,
\end{eqnarray*}
where $X_{i}^{'}=e_3(X_i),$ and $X_{i}^{''}=e_3(e_3(X_i)).$
Applying once again the covariant derivative, we get
%%%%%%%%%%%%%%%%%%%%%%%%%%%%%%%%%%%%%%%%%%%%%%%%%%%%%%%%%%%%%%%%%%%%%%%%%%%%%%%%%%%%%%
\begin{eqnarray*}
\nabla_{e_1}\nabla_{e_1}X&=&-m^2X_1e_1-m^2X_3e_3,\\
\nabla_{e_2}\nabla_{e_2}X&=&-n^2X_2e_2-n^2X_3e_3,\\
\nabla_{e_3}\nabla_{e_3}X&=&X_{1}^{''}e_1+X_{2}^{''}e_2+X_{3}^{''}e_3,
\end{eqnarray*}
and the following formulas
\[
\begin{aligned}
\nabla_{\nabla_{e_1}e_1}X
&=m\nabla_{e_3}X
=m\left(X_1'e_1+X_2'e_2+X_3'e_3\right),\\
\nabla_{\nabla_{e_2}e_2}X
&=-n\nabla_{e_3}X
=-n\left(X_1'e_1+X_2'e_2+X_3'e_3\right),\\
\nabla_{\nabla_{e_3}e_3}X
&=0.
\end{aligned}
\]
Using $\overline{\Delta}X=\nabla_{e_i}\nabla_{e_i}X-\nabla_{\nabla_{e_i}e_i}X$, the rough Laplacian of $X$ is given by
\[
\begin{aligned}
\overline{\Delta}X
&=
\left(-m^2X_1e_1-m^2X_3e_3\right)
+\left(-n^2X_2e_2-n^2X_3e_3\right)
+\left(X_1''e_1+X_2''e_2+X_3''e_3\right)\\
&\qquad
-m\left(X_1'e_1+X_2'e_2+X_3'e_3\right)
+n\left(X_1'e_1+X_2'e_2+X_3'e_3\right)\\
&=
\left(X_1''+(n-m)X_1'-m^2X_1\right)e_1
+\left(X_2''+(n-m)X_2'-n^2X_2\right)e_2\\
&\qquad
+\left(X_3''+(n-m)X_3'-(m^2+n^2)X_3\right)e_3.
\end{aligned}
\]
\end{proof}

The previous theorem characterizes harmonic sections through a system of independent linear differential equations. Solving this system using the characteristic equation method, and noting that $m,n>0$ ensures real roots, we obtain the following explicit form of harmonic vector fields on $Sol^3_{m,n}$.

\begin{thm}\label{th3-B}
Let
$
X=X_1(t)e_1+X_2(t)e_2+X_3(t)e_3
$
be a vector field on $Sol^3_{m,n}$. Then, $X$ is a harmonic section
with respect to the metric $g$ if and only if
\[
\begin{aligned}
X_1(t)&=a_1{\rm e}^{\frac{m-n+\sqrt{n^2-2mn+5m^2}}{2}\,t}+b_1{\rm e}^{\frac{m-n-\sqrt{n^2-2mn+5m^2}}{2}\,t},\\[2mm]
X_2(t)&=a_2{\rm e}^{\frac{m-n+\sqrt{m^2-2mn+5n^2}}{2}\,t}+b_2{\rm e}^{\frac{m-n-\sqrt{m^2-2mn+5n^2}}{2}\,t},\\[2mm]
X_3(t)&=a_3{\rm e}^{\frac{m-n+\sqrt{5m^2-2mn+5n^2}}{2}\,t}+b_3{\rm e}^{\frac{m-n-\sqrt{5m^2-2mn+5n^2}}{2}\,t},
\end{aligned}
\]
where $a_1,a_2,a_3,b_1,b_2,b_3\in\mathbb{R}$.
\end{thm}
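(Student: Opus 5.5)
By Theorem \ref{th3}, $X=X_1(t)e_1+X_2(t)e_2+X_3(t)e_3$ is a harmonic section if and only if each component satisfies its own scalar equation, and these equations are decoupled. The plan is to solve each one explicitly. Since $e_3=\partial/\partial t$ and the $X_i$ depend only on $t$, the primes are ordinary derivatives in $t$. So each equation is a second-order linear ODE with constant coefficients,
\[
X''+(n-m)X'-kX=0,\qquad k\in\{m^2,\;n^2,\;m^2+n^2\}.
\]
It therefore suffices to treat this single model equation with $k>0$ and then specialize $k$.

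The first step is to write the characteristic equation
\[
r^2+(n-m)r-k=0,
\]
whose roots are
\[
r_\pm=\frac{(m-n)\pm\sqrt{(n-m)^2+4k}}{2}.
\]
The discriminant is $(n-m)^2+4k$. Since $m,n>0$ we have $k>0$, so the discriminant is strictly positive and the two roots are real and distinct. The standard theory of linear constant-coefficient ODEs then says that $\{{\rm e}^{r_+t},{\rm e}^{r_-t}\}$ is a fundamental system. Hence every solution has the form $a\,{\rm e}^{r_+t}+b\,{\rm e}^{r_-t}$ with $a,b\in\mathbb{R}$, and conversely every such function is a solution.

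The second step is to evaluate the discriminant for the three values of $k$:
\[
(n-m)^2+4m^2=n^2-2mn+5m^2,\qquad
(n-m)^2+4n^2=m^2-2mn+5n^2,
\]
\[
(n-m)^2+4(m^2+n^2)=5m^2-2mn+5n^2.
\]
These are exactly the radicands in the statement, which gives the stated expressions for $X_1$, $X_2$, $X_3$.

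Both implications follow at once: Theorem \ref{th3} is an equivalence, and the general-solution description is exhaustive. There is no real obstacle here. The only points needing care are to confirm that the discriminant never vanishes, so that no repeated-root case of the form $(a+bt){\rm e}^{rt}$ arises, and to check the sign convention $r_\pm=\tfrac{(m-n)\pm\sqrt{\cdot}}{2}$ against the coefficient $(n-m)$ of $X'$.
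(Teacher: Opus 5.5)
Your proposal is correct and follows the same route as the paper. You invoke Theorem \ref{th3}, solve each decoupled constant-coefficient equation through its characteristic polynomial $r^2+(n-m)r-k=0$, and use $m,n>0$ to get real roots; your check that the discriminants $(n-m)^2+4k$ are strictly positive (so the roots are distinct) and equal the stated radicands is slightly more careful than the paper's one-line remark.
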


\section{Conclusion}

In this paper, we studied Ricci solitons on the $3$-dimensional solvable Lie group $Sol^{3}_{m,n}$ endowed with the left-invariant Riemannian metric $g$. Using an orthonormal left-invariant frame and the corresponding Levi--Civita connection, we characterized the Ricci soliton vector fields. We also investigated harmonic maps and established a Liouville-type theorem under suitable curvature assumptions. Moreover, we derived the geodesic equations of $(Sol^{3}_{m,n},g)$, studied its geodesic curves, and determined the necessary and sufficient conditions for the integral curves of the Ricci soliton vector field to be geodesics. Finally, we examined harmonic vector fields on $Sol^{3}_{m,n}$.\\
These results contribute to the understanding of the interplay between Ricci solitons, harmonic maps, harmonic vector fields, and geodesic geometry on solvable Lie groups. Future work may include the study of the stability of these Ricci solitons under the Ricci flow and the extension of these results to other homogeneous and Thurston geometries.

%%%%%%%%%%%%%%%%%%%%%%%%%%%%%%%%%%%%%%%%%%%%%%%%%%%%%%%%%%%%%%%%%%%%%%%%%%%%%%%%%%%%%%%%%%%%%%%%%%%%%%%%%%%%%%%%%%%%%%%%%%%%%%%%%%%%

%----------------------------------------------------------------------------------------

\end{document}